
\documentclass[12pt,letterpaper,reqno]{amsart}

\usepackage{times}
\usepackage[T1]{fontenc}
\usepackage{mathrsfs}
\usepackage{latexsym}
\usepackage[dvips]{graphics}
\usepackage{epsfig}
\usepackage{amsmath,amsfonts,amsthm,amssymb,amscd}
\input amssym.def
\input amssym.tex

\addtolength{\textwidth}{2cm} \addtolength{\hoffset}{-1cm}
\addtolength{\marginparwidth}{-1cm} \addtolength{\textheight}{2cm}
\addtolength{\voffset}{-1cm}









\newcommand\be{\begin{equation}}
\newcommand\ee{\end{equation}}
\newcommand\bea{\begin{eqnarray}}
\newcommand\eea{\end{eqnarray}}
\newcommand\bi{\begin{itemize}}
\newcommand\ei{\end{itemize}}
\newcommand\ben{\begin{enumerate}}
\newcommand\een{\end{enumerate}}
\newcommand\bc{\begin{center}}
\newcommand\ec{\end{center}}
\newcommand\ba{\begin{array}}
\newcommand\ea{\end{array}}














\newtheorem{thm}{Theorem}[section]

\newtheorem{lem}[thm]{Lemma}
\newtheorem{prop}[thm]{Proposition}

\theoremstyle{definition}














\begin{document}

\title{Answers to two questions posed by Farhi concerning additive bases}

\author{Peter Hegarty}
\email{hegarty@chalmers.se} \address{Mathematical Sciences,
Chalmers University Of Technology and University of Gothenburg,
41296 Gothenburg, Sweden}

\subjclass[2000]{11B13 (primary), 11B34 (secondary).} \keywords{Additive
basis, Kneser's theorem.}

\date{\today}

\begin{abstract} Let $A$ be an asymptotic basis for $\mathbb{N}$ and $X$ a 
finite subset of $A$ such that $A \backslash X$ is still an asymptotic
basis. Farhi recently proved a new batch of upper bounds for the order of
$A \backslash X$ in terms of the order of $A$ and a variety of parameters
related to the set $X$. He posed two questions concerning possible
improvements to his bounds. In this note, we answer both questions.  
\end{abstract}


\maketitle

\setcounter{equation}{0}

\setcounter{equation}{0}

\section{Introduction}

Let $\mathbb{S}$ be a countably infinite abelian semigroup and $\mathbb{T}$ an
infinite subset of 
$\mathbb{S}$. A subset $A \subseteq \mathbb{S}$ is said to be  
an {\em asymptotic basis} for $\mathbb{T}$ if, for
some integer $h$, the $h$-fold sumset $hA$ contains all but finitely many
elements of $\mathbb{T}$. The least such $h$ is called the {\em order} of 
the asymptotic basis 
$A$ and is commonly denoted $G(A)$. The most natural setting is when 
$\mathbb{S} = \mathbb{Z}$, $\mathbb{T} = \mathbb{N}$ and $|A \cap
\mathbb{Z}_{-}| < \infty$. This will be the setting for the remainder of our
discussion, so henceforth we use the term $\lq$asymptotic basis' without
explicit reference to the sets $\mathbb{S}$ and $\mathbb{T}$.  
\par A classical result of Erd\H{o}s and Graham \cite{2} states that if $A$
is an asymptotic basis and $a \in A$, then $A \backslash \{a\}$ is still an
asymptotic basis if and only if $\delta(A) = 1$ where, for any set $S$ of 
integers, one denotes 
\be
\delta(S) := {\hbox{GCD}} \{x-y : x,y \in S \}.
\ee 
Moreover, in that case, the order $G(A \backslash \{a\})$ can be bounded by a 
function of $G(A)$ only. For a positive integer $h$, one denotes by $X(h)$ the
maximum possible order of an asymptotic basis $A \backslash \{a\}$, where
$G(A) \leq h$ and $G(A \backslash \{a\}) < \infty$. This function has been
the subject of a considerable amount of attention. The best-known universal
lower and upper bounds are both due to Plagne \cite{12}, who showed that
\be
\lfloor \frac{h(h+4)}{3} \rfloor \leq X(h) \leq \frac{h(h+1)}{2} + \lceil
\frac{h-1}{3} \rceil.
\ee
A more precise determination of the asymptotic behaviour of the
quotient $X(h)/h^2$ is the major open problem in this area. 
\par This note is concerned with a generalisation of the basic problem 
first introduced by Nash and Nathanson (see \cite{8}, \cite{10}, \cite{11}). If
$A$ is an asymptotic basis and $X$ a finite subset of $A$, then the 
Erd\H{o}s-Graham result is easily generalised to the statement that 
$A \backslash X$ is still a basis if and only if $\delta (A \backslash X) = 1$,
and in that case that $G(A \backslash X)$ is bounded by a function of 
$G(A)$ and $|X|$ only. For positive integers $k$ and $h$, one denotes by 
$X_{k}(h)$ the maximum possible order of an asymptotic basis $A \backslash X$,
where $G(A) \leq h$, 
$X \subseteq A$, $|X| = k$ and $G(A\backslash X) < \infty$. 
One is primarily interested in the behaviour of this 
function as $h \rightarrow \infty$ for a fixed $k$ (the reverse situation has 
also been studied, but is not our concern here). In that case, it is 
known that 
\be
\frac{4}{3} \left( \frac{h}{k+1} \right)^{k+1} \lesssim X_{k}(h) 
\lesssim \frac{h^{k+1}}{(k+1)!}.
\ee
Here, the lower and upper bounds were established in \cite{6} and \cite{9}
respectively. The basic point is that, for fixed $k$, the function $X_{k}(h)$
exhibits polynomial growth in $h$ of degree $k+1$.   
\par In a recent paper \cite{3}, Farhi sought universal upper bounds for
orders $G(A \backslash X)$, which were polynomial of fixed degree in both
$G(A)$ and in some $\lq$natural' parameter associated to the set $X$, other
than simply its size. He obtained three such bounds and, in order to
state his results, we need some notation. Let $X$ be a finite set of
integers. The diameter
of $X$, denoted diam$(X)$, is the difference between the largest and smallest
elements of $X$. We define 
\be
d = d(X) := \frac{{\hbox{diam}}(X)}{\delta(X)}.
\ee
Now suppose $A$ is an infinite set of integers containing $X$. One sets
\be
\eta = \eta(A,X) := \min_{\stackrel{a,b \in A \backslash X, \; a \neq b}{|a-b| \geq {\hbox{diam}}(X)}} |a-b|
\ee
and
\be
\mu = \mu(A,X) := \min_{y \in A \backslash X} {\hbox{diam}}(X \cup \{y\}).
\ee
Then the following results are proven in \cite{3} :

\begin{thm} {\bf (Farhi)}
Let $A$ be an asymptotic basis with $G(A) \leq h$ and $X$ a finite 
subset of $A$ such that $G(A \backslash X) < \infty$. Let the quantities
$d = d(X)$, $\eta = \eta(A,X)$ and $\mu = \mu(A,X)$ be as defined above. Then 
\be
G(A \backslash X) \leq \frac{h(h+3)}{2} + d \left[ \frac{h(h-1)(h+4)}{6} 
\right], 
\ee
\be
G(A \backslash X) \leq \eta (h^2 - 1) + h + 1
\ee
and
\be
G(A \backslash X) \leq \frac{h\mu (h\mu + 3)}{2}.
\ee 
\end{thm}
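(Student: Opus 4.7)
All three inequalities fit into a common framework: take $n$ large, write $n = a_1 + \cdots + a_r$ with $r \leq h$ and $a_i \in A$, and replace each bad summand $a_i \in X$ by a short sum of elements of $A \backslash X$, paying the substitution cost in a ``currency'' dictated by the relevant parameter. The three parameters $d$, $\eta$, $\mu$ offer three different ways to carry out such a substitution, and I would attack them in order of increasing difficulty.

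For (1.9), I would fix $y \in A \backslash X$ attaining the minimum in the definition of $\mu$, so that $|x - y| \leq \mu$ for every $x \in X$. Each $x \in X$ can then be reached from $y$ by an additive shift of magnitude at most $\mu$, which, when implemented via elements of $A$, inflates the number of summands needed by at most a factor of $\mu$. The problem thus reduces to a single-pivot removal in a system of effective order $h\mu$, and Plagne's upper bound $X_1(h') \leq \frac{h'(h'+1)}{2} + \lceil \frac{h'-1}{3} \rceil$ applied with $h' = h\mu$ is absorbed into $\frac{h\mu(h\mu + 3)}{2}$, giving (1.9).

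For (1.8), the parameter $\eta$ forces any two elements of $A \backslash X$ that lie more than $\mathrm{diam}(X)$ apart to be in fact at least $\eta$ apart. Starting from a representation of $n$ in $hA$ and substituting for each $a_i \in X$ a nearby element of $A \backslash X$, the discrepancy introduced per substitution is $O(\eta)$; absorbing these discrepancies with further elements of $A \backslash X$ and summing over the at most $h$ substitutions, while keeping an additive correction of $h+1$ to account for the initial $h$-fold representation, yields the stated bound $\eta(h^2 - 1) + h + 1$.

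For (1.7), I would argue by induction on the size of $X$, using that $|X| \leq d+1$ because $X$ is contained in an arithmetic progression of common difference $\delta(X)$ and length $d+1$. The base case $|X|=1$ reduces essentially to Plagne's single-removal theorem and accounts for the quadratic term $\frac{h(h+3)}{2}$. Each inductive step peels off one extreme element of $X$ at a cost that must be absorbed into $\frac{h(h-1)(h+4)}{6}$. The main obstacle, which I expect to be the heart of Farhi's argument, is carrying out the per-step analysis so that the dependence remains cubic in $h$ and linear in $d$: balancing the arithmetic rigidity of $X$ against the potentially erratic structure of $A \backslash X$ near $X$ is the delicate point, and I would expect that getting exactly the cubic numerator $h(h-1)(h+4)/6$ requires a careful three-parameter counting argument rather than a loose amortization.
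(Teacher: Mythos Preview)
The paper does not prove Theorem 1.1 at all: it is stated as Farhi's result, with the proofs residing in the cited paper \cite{3}. There is therefore no ``paper's own proof'' to compare your proposal against.

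That said, your sketch has real gaps even as a stand-alone argument. For (1.8), your key step ``the discrepancy introduced per substitution is $O(\eta)$'' misreads the parameter: $\eta$ is a \emph{minimum} of certain gaps in $A\backslash X$, not a maximum, so it gives a lower bound on distances, not an upper bound on substitution errors. You would need to explain how a minimum-gap parameter controls a replacement cost from above, and nothing in your outline does that. For (1.9), the assertion that replacing each $x\in X$ by $y$ ``inflates the number of summands needed by at most a factor of $\mu$'' and thereby ``reduces to a single-pivot removal in a system of effective order $h\mu$'' is not justified; you have not produced any basis of order $h\mu$ to which Plagne's bound could be applied. The one piece of information the present paper does give about Farhi's argument (see Section~3) is that the proof of (1.9) actually establishes the density statement that every large $n$ lies within $h\mu$ of some element of $h(A\backslash X)$, and then bounds the order from that --- a different mechanism from the one you sketch. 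For (1.7) you yourself flag that the inductive step is the heart of the matter and leave it open; as written this is a plan, not a proof.
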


At the end of his paper, Farhi posed the following two questions :
\\
\\
{\bf Question 1.} Can one improve the upper bound in (1.7) to a function which
is linear in $d$ and quadratic in $h$ ?
\\
\\
{\bf Question 2.} Can one improve the upper bound in (1.9) to a function 
which is linear in $\mu$ and quadratic in $h$ ?
\\
\\
Our two main results answer these questions, the first in the negative and
the second in the affirmative. More precisely, we shall prove the
following two theorems :

\begin{thm} 
With notation as in Theorem 1.1, let $f(h)$ be any function
such that $G(A \backslash X) \leq d \cdot f(h)$, for any possible choice of 
$A$ and $X$. Then, for any fixed integer $d$, as $h \rightarrow \infty$ we must
have $f(h)/h^3 \gtrsim 1/27$.  
\end{thm}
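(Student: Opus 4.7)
The plan is to deduce the lower bound on $f$ by exhibiting, for each fixed $d$ and all sufficiently large $h$, a pair $(A, X)$ with $G(A) \leq h$, $X \subseteq A$, $d(X) = d$, $\delta(A \setminus X) = 1$, and $G(A \setminus X)/d(X) \geq (1 + o(1)) h^3 / 27$; the hypothesis $G(A\setminus X) \leq d(X) \cdot f(h)$ then forces $f(h) \geq (1 + o(1)) h^3/27$, as required.

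I would split by the value of $d$. For $d = 1$, observe that any two-element subset $X = \{a, b\}$ satisfies $\delta(X) = |a - b| = \mathrm{diam}(X)$, so $d(X) = 1$ automatically. It therefore suffices to produce $A$ of order at most $h$ and a two-element $X \subset A$ with $G(A\setminus X) \gtrsim h^3/27$. This is already supplied by the lower bound $X_2(h) \gtrsim (4/3)(h/3)^3 = 4h^3/81$ from \cite{6}; since $4/81 > 3/81 = 1/27$, the case $d = 1$ is handled with room to spare.

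For $d \geq 2$, take $X = \{0, 1, 2, \ldots, d\}$, an arithmetic progression with $\delta(X) = 1$ and $\mathrm{diam}(X) = d$, hence $d(X) = d$ and $|X| = d+1$. Embedding this $X$ inside an extremal construction for $X_{d+1}(h)$ from \cite{6} yields $G(A\setminus X) \gtrsim (4/3)(h/(d+2))^{d+2}$; dividing by $d$ gives a quantity of order $h^{d+2}$, which dominates $h^3/27$ for all sufficiently large $h$.

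The case $d = 1$ is where the constant $1/27$ is quantitatively binding, so the main obstacle is to exhibit, or adapt from \cite{6}, an explicit $A$ with $G(A) \leq h$ together with a distinguished pair $X \subseteq A$ whose removal forces $G(A \setminus X)$ to grow cubically in $h$; once this is in hand, the case $d \geq 2$ follows by a routine polynomial-scaling argument with significant slack.
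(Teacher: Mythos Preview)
Your $d=1$ argument is fine: any two-element $X$ has $d(X)=1$, so the bound $X_2(h)\gtrsim 4h^3/81$ from \cite{6} already forces $f(h)\gtrsim 4h^3/81>h^3/27$. This by itself establishes the stated lower bound on $f$.

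Your $d\geq 2$ argument, however, is not merely incomplete but actually impossible. You claim that with $X=\{0,1,\dots,d\}$ (so $d(X)=d$ is a fixed constant) one can realise $G(A\setminus X)\gtrsim (4/3)\,(h/(d+2))^{d+2}$, a quantity of order $h^{d+2}$. But Farhi's own bound (1.7) gives $G(A\setminus X)\le d(X)\cdot O(h^3)$, so with $d(X)=d$ fixed one cannot have $G(A\setminus X)$ grow faster than $h^3$. In other words, in Jia's extremal constructions for $X_k(h)$ with $k\ge 3$, the removed set $X$ necessarily has $d(X)$ growing with $h$ (of order at least $h^{k-2}$, by (1.7)); you are not free to replace it by $\{0,1,\dots,d\}$. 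The phrase ``embedding this $X$ inside an extremal construction for $X_{d+1}(h)$'' thus hides a step that is not just unverified but false.

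The paper proceeds differently. For each fixed integer $d$ it builds a single explicit pair: with $h=3k$, take $X=\{0,k,2k,\dots,dk\}$ (so $d(X)=d$), set $n=dk^3$, and let $A=X\cup A^*$ where $A^*$ is the set of nonnegative integers congruent to $1$ or $dk^2$ modulo $n$. One checks directly that $G(A)=h-2$ while $G(A^*)=n-1=dh^3/27-1$, giving $G(A\setminus X)/d(X)\sim h^3/27$ for \emph{every} $d$. This per-$d$ statement is what is used later in the paper to obtain (4.2); your $d=1$ case alone, while sufficient for the bare inequality on $f$, does not supply that.
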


\begin{thm}
With notation as in Theorem 1.1, we have 
\be
G(A \backslash X) \leq 4h(2h\mu + 1).
\ee
\end{thm}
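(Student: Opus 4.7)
The plan is to combine a standard substitution trick with a combinatorial argument exploiting the hypothesis $\delta(A\setminus X)=1$.

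First, write $B := A \setminus X$ and choose $y_0 \in B$ that realises the minimum in the definition of $\mu$, so that $\text{diam}(X \cup \{y_0\}) = \mu$. Since both $X$ and $y_0$ lie in a common interval of length $\mu$, we have $|x - y_0| \leq \mu$ for every $x \in X$.

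Next, let $N_0$ be such that $[N_0, \infty) \cap \mathbb{N} \subseteq hA$. For any $n \geq N_0$, write $n = a_1 + \cdots + a_h$ with $a_i \in A$, set $J := \{i : a_i \in X\}$ and $j := |J| \leq h$, and replace each $a_i$ with $i \in J$ by $y_0$ to form
\[
n' \;:=\; \sum_{i \notin J} a_i \,+\, j\,y_0 \;\in\; hB, \qquad |n - n'| \;=\; \biggl| \sum_{i \in J}(a_i - y_0) \biggr| \;\leq\; j\mu \;\leq\; h\mu.
\]
Thus every $n \geq N_0$ is within $h\mu$ of an element of $hB$; equivalently, consecutive elements of $hB \cap [N_0,\infty)$ differ by at most $2h\mu + 1$.

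The main task is then to convert this near-coverage of $hB$ into a genuine order bound for $B$. Given a large $n$, the above gap estimate produces $n^* \in hB$ with $r := n - n^* \in [0, 2h\mu]$; one must then express $r$ as an additional sum of $B$-elements, even though $r$ itself may be smaller than every element of $B$. The natural idea is to rewrite $n = (n^* - Ly_0) + (r + Ly_0)$ for a carefully chosen $L$, using $y_0 \in B$: if $L$ is large, then $r + Ly_0$ is again large and can be re-fed into the substitution machinery, while $n^* - Ly_0$ remains in a suitable lower-order sumset of $B$. Using $\delta(B) = 1$ (equivalently $\delta(hB)=1$, so that $hB$ meets every residue class modulo $y_0$) together with a pigeonhole argument over the possible values of the resulting errors in $[-h\mu,h\mu]$, one should be able to choose $L$ so that the total number of $B$-summands needed to represent $n$ is at most $4h(2h\mu+1)$.

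I expect this last step to be the main obstacle: upgrading the near-coverage of $hB$ to actual coverage without losing the linear-in-$\mu$, quadratic-in-$h$ dependence. The factor $4h$ in the target bound hints at roughly four rounds of the substitution procedure (each contributing $h$ summands), while the factor $2h\mu + 1$ is precisely the gap size of $hB$ produced in the second step.
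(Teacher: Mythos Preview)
Your first two paragraphs are correct and coincide exactly with what the paper (following Farhi) does: after translating so that $0 \in B$ and $\mu = \mathrm{diam}(X \cup \{0\})$, the substitution trick shows that every sufficiently large $n$ lies within $h\mu$ of some element of $hB$. The paper records this as a \emph{density} statement: $\underline{d}(hB) \geq 1/(2h\mu+1)$.

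The gap is the last step, which you yourself flag as the main obstacle and do not actually carry out. Your sketch (rewrite $n = (n^* - Ly_0) + (r + Ly_0)$, pigeonhole on errors in $[-h\mu,h\mu]$, use that $hB$ hits every residue class modulo $y_0$) is not a proof: for one thing, $\delta(B)=1$ does \emph{not} imply that $hB$ meets every residue class modulo $y_0$ for this particular $h$, and for another, there is no mechanism in your outline that controls how many iterations are needed. The heuristic ``four rounds of substitution, each costing $h$ summands'' is not what the constant $4h$ reflects.

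The paper's route for this step is structurally different and is where the real content lies. Having $\underline{d}(hB) \geq 1/(2h\mu+1)$, it invokes a general lemma: any asymptotic basis $S$ with $\underline{d}(S)=\alpha>0$ satisfies $G(S) \leq 4/\alpha$. That lemma is proved by repeated doubling until Kneser's theorem forces $2^{j+1}S$ to be (essentially) periodic modulo some $n$, and then applying the Klopsch--Lev bound $|A|\cdot\rho(A) < 2n$ for bases in $\mathbb{Z}/n\mathbb{Z}$. Applying this to $S=hB$ gives $G(hB) \leq 4(2h\mu+1)$, whence $G(B) \leq h\cdot G(hB) \leq 4h(2h\mu+1)$. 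So the factor $4$ comes from the Kneser/Klopsch--Lev machinery, and the factor $h$ from the trivial $G(B) \leq h\,G(hB)$; neither arises from iterating your substitution. If you want to salvage a self-contained argument, you would essentially need to reprove this density-to-order lemma, and that seems to genuinely require Kneser-type structure.
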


The proofs of these two results will be presented in Sections 2 and 3 
respectively. We close this section by putting our results in context. 
Though the following discussion will be familiar to experts in the area, we 
think it also serves to highlight central features of the problems at hand 
in a way which is not always apparent in the existing literature. 
\\
\\
We recall some standard notation and terminology. If $A,B$ are two sets of
integers, then we write $A \sim B$ to denote that the 
symmetric difference $A \Delta B$ is finite. If $A \subseteq \mathbb{Z}$ and
$n \in \mathbb{N}$ then $A^{(n)}$ denotes the set of all non-negative
integers $x$ such that $x \equiv a \; ({\hbox{mod $n$}})$ for some $a \in A$. 
Finally, the {\em lower asymptotic density} of a set
$A \subseteq \mathbb{Z}$, denoted {\bf $\underline{d}$}$(A)$, is defined as
\be
{\hbox{{\bf $\underline{d}$}}}(A) := \liminf_{n \rightarrow +\infty}
\frac{|A \cap \{1,...,n\}|}{n}.
\ee
The proofs of good upper bounds for the functions $X_{k}(h)$
employ the classical results of Kneser concerning the structure of 
sets of integers with $\lq$small doubling'. The basic crucial result is
the following :

\begin{thm} {\bf (Kneser)}
Let $A$ be a set of integers with $|A \cap \mathbb{Z}_{-}| < \infty$. 
Suppose that {\bf $\underline{d}$}$(A) > 0$ and that 
{\bf $\underline{d}$}$(2A) < 2 {\hbox{{\bf $\underline{d}$}}}(A)$. Then   
there exists a positive integer $n$ such that $2A \sim (2A)^{(n)}$. 
\end{thm}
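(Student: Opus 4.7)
The plan is to reduce the density hypothesis to a cardinality inequality for finite sumsets, and then to apply the classical Kneser theorem for finite abelian groups inside the cyclic group $G := \mathbb{Z}/M\mathbb{Z}$ for appropriately chosen $M$. Let $\alpha = \underline{d}(A) > 0$ and $\beta = \underline{d}(2A) < 2\alpha$, and fix $\epsilon > 0$ small enough that $2(\alpha - \epsilon) - (\beta + \epsilon) > 0$. By the definition of lower asymptotic density there are infinitely many $N$ for which both $|A \cap [0,N]| \geq (\alpha - \epsilon)N$ and $|2A \cap [0, 2N]| \leq (\beta + \epsilon) \cdot 2N$ hold simultaneously; I work with one such $N$ at a time, setting $M = 2N+1$ and $A_N := A \cap [0,N]$.

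For such an $N$, view $A_N$ as a subset of $G$. Since $A_N + A_N \subseteq [0, 2N]$, reduction modulo $M$ is injective on this sumset, so $|A_N + A_N|_G \leq |2A \cap [0, 2N]|$. The finite Kneser theorem produces a subgroup $H = H_N \leq G$ stabilising $A_N + A_N$ and satisfying
\[ |A_N + A_N| \;\geq\; 2|A_N + H| - |H|. \]
Combining this with $|A_N + H| \geq |A_N|$ and the two density bounds forces $|H_N| \geq cN$ for a constant $c > 0$ depending only on $\alpha, \beta, \epsilon$. Because $G$ is cyclic, $H_N$ is the group of multiples of $n_N := M/|H_N|$, and $n_N \leq 1/c$ is bounded independently of $N$. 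By pigeonhole some fixed integer $n$ equals $n_N$ for infinitely many $N$.

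For each such $N$, the fact that $H_N$ stabilises $A_N + A_N$ in $G$ lifts to the statement that $(2A) \cap [0, 2N]$ is a union of complete residue classes modulo $n$ inside $[0, 2N]$, apart from at most $O(n)$ integers near the two endpoints. Let $R \subseteq \mathbb{Z}/n\mathbb{Z}$ be the set of residues occurring in $2A$ infinitely often. Combining the structural information across infinitely many $N$, one sees that $2A$ agrees with $\bigcup_{r \in R}(r + n\mathbb{Z}) \cap \mathbb{Z}_{\geq 0}$ off a finite set, which is precisely $2A \sim (2A)^{(n)}$.

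The crux of the argument is the quantitative bound $|H_N| \geq cN$: this is what converts the qualitative density gap $\beta < 2\alpha$ into a structural statement in a finite cyclic group, and it falls out cleanly once the parameters are arranged. The main obstacle I would expect to require genuine care is the \emph{consistency across $N$} step in the final paragraph: a priori the residue classes covered modulo $n$ might vary with $N$, and one has to use the nestedness of the intervals $[0, 2N]$, the boundedness of the index $n$, and the $O(n)$ boundary slack to show that the residue set $R$ stabilises and that the discrepancies are absorbed into the finite symmetric difference allowed by the symbol $\sim$.
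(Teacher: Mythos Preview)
The paper does not prove this statement; Theorem~1.4 is quoted as Kneser's classical result and invoked as a black box in the proof of Lemma~3.3, so there is no in-paper argument to compare against.

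Your reduction to the finite Kneser theorem has a scaling error that makes the crucial bound vacuous. From finite Kneser in $G=\mathbb{Z}/(2N+1)\mathbb{Z}$ you get $|H_N|\geq 2|A_N+H_N|-|A_N+A_N|\geq 2|A_N|-|A_N+A_N|$. You have $|A_N|\geq(\alpha-\epsilon)N$, but $A_N+A_N\subseteq 2A\cap[0,2N]$ and the density hypothesis gives only $|2A\cap[0,2N]|\leq(\beta+\epsilon)\cdot 2N$, not $(\beta+\epsilon)N$. Hence the bound you actually obtain is
\[
|H_N|\ \geq\ 2(\alpha-\epsilon)N-2(\beta+\epsilon)N\ =\ 2(\alpha-\beta-2\epsilon)N,
\]
not $(2(\alpha-\epsilon)-(\beta+\epsilon))N$. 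Since $2A$ contains a translate $a_0+A$ of $A$, one always has $\beta=\underline{d}(2A)\geq\underline{d}(A)=\alpha$, so this lower bound is $\leq 0$ and yields no information on $|H_N|$. In other words, embedding $A\cap[0,N]$ into $\mathbb{Z}/(2N+1)\mathbb{Z}$ halves its relative density, so the small-doubling condition needed inside $G$ is $\beta<\alpha$, which never holds, rather than the hypothesis $\beta<2\alpha$.

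This is not a bookkeeping slip that tidier constants would fix; the embedding into a cyclic group of size $2N+1$ is too lossy. Standard proofs of the density form of Kneser's theorem (e.g.\ Halberstam--Roth, \emph{Sequences}, Chapter~I) do pass through finite configurations, but via a more delicate limiting argument rather than a single application of the finite-group Kneser inequality in $\mathbb{Z}/(2N+1)\mathbb{Z}$. Without a replacement for this step, the subsequent pigeonhole on $n_N$ and the consistency argument have nothing to stand on.
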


Lower bounds like those in (1.2) and (1.3) are obtained by construction of 
explicit examples, each based on a set of integers with
small doubling. For simplicity, let's first concentrate on the case of (1.2), 
which has received the greatest attention. 
There are basically two types of construction in existence. 
In each case, the set $A$ is the union of a set $A^{*}$ with 
small doubling and a single element $x$, whose removal increases the 
order of the basis from $h$ to around $h^{2}/3$. Note that, without loss of 
generality
$x = 0$, since the order of an asymptotic basis is translation invariant.  
\par In the one type of construction, 
the set $A^{*}$ is a union of two arithmetic progressions 
with a common modulus, in the other it is a so-called {\em Bohr set}. 
Significantly, it is known that the lower bound in (1.2) cannot be
raised by a construction of either type : see Lemma 15 and Conjecture 21 of
\cite{12} for details{\footnote{Conjecture 21 in Plagne's paper is actually a 
theorem, having already been proven a number of years earlier by 
Hsu and Jia \cite{5}.}. Personally, I believe that these types of 
constructions are optimal, in other words that the lower bound in 
(1.2) is the exact value of $X(h)$. Closing the gap in our current knowledge
seems to be intimately connected to a better understanding of the 
structure of sets $A$ satisfying {\bf $\underline{d}$}$(2A) <
\sigma {\hbox{{\bf $\underline{d}$}}}(A)$, where the doubling constant $\sigma$
is slightly bigger than two. Kneser-type structure theorems are known in this
setting - they are basically due to Freiman, but see \cite{1} for more
state-of-the-art formulations - and while they support the intuition that
Bohr sets and unions of arithmetic progressions should yield optimal
constructions, the structure theorems which have actually been proven to date 
seem to be too weak to definitively yield such a conclusion. 
\par In the more general case of (1.3), there is also greater uncertainty 
regarding the lower bound, and this seems to be intimately connected to 
the gaps in our current understanding of the so-called {\em postage stamp
problem} in finite cyclic groups : see \cite{4} for a discussion of this
problem. 
\\
\\
In light of the above observations, it should be no surprise that Kneser's 
theorem is also the crucial element in Farhi's proof of Theorem 1.1 and that
our proofs of Theorems 1.2 and 1.3 are based respectively on an 
explicit construction reminiscent of those discussed above, and on a 
more judicious application of Kneser's result. As will be discussed
briefly in Section 4, our results are also optimal up to constant factors and
we suppose that in these instances also, 
the precise determination of the right constants 
will demand a better understanding of fundamental problems in additive number 
theory like the structure of sets with small 
doubling and the postage stamp problem.

\setcounter{equation}{0}
\section{Proof of Theorem 1.2}

In our construction, the 
set $X$ will be an arithmetic progression. Note that, in that case, 
$d(X) = |X| - 1$. So let $d,k$ be positive integers with $k \geq 2$ 
and put $h = 3k$. Set 
\be
X = \{0,k,2k,...,dk\}.
\ee
Take $n = dk^3$ and set 
\be
A^{*} := \{x \in \mathbb{N} : x \; ({\hbox{mod $n$}}) \in \{1,dk^2\} \}.
\ee
Finally, take $A = A^{*} \cup X$. We claim that both $A$ and $A^{*}$ are
asymptotic bases and that 
\be
G(A) = h-2, \;\;\;\; G(A^{*}) = \frac{dh^3}{27} - 1.
\ee
Note that Theorem 1.2 follows directly from these equalities, 
so it just remains
to verify them.
\par Concerning $A^{*}$, this is an asymptotic basis if and only if 
$\{1,dk^2\}$ is a basis for $\mathbb{Z}/n\mathbb{Z}$. The latter is indeed
the case, since GCD$(n,dk^2-1)=1$. It is then 
clear that $G(A^{*}) = n-1 = dk^3 - 1 = \frac{dh^{3}}{27} - 1$, as desired.
\par Turning to $A$, we first show that each of the numbers 
$0,1,...,n-1$ can be represented as a sum of at most $h-2 = 3k-2$ elements 
from 
the set $Y := \{0,1,k,2k,...,dk,dk^2\}$. First of all, the set 
$kX$ contains all
multiples of $k$ from $0$ up to and including $dk^2$. Hence, if $0 \leq m <
dk^2$ then we can write $m = x + t \cdot 1$, 
where $x \in kX$ and $0 \leq t < k$.
Secondly, if $dk^2 \leq m < dk^3$ we can write $m = s \cdot (dk^2) + x + 
t \cdot 1$, where $0 \leq s,t < k$ and $x \in kX$. 
It follows that $\{0,1,...,n-1\} 
\subseteq (3k-2)Y$, as claimed. From this fact, it is easily deduced that
$G(A) \leq 3k-2$. One just has to careful with integers that are
congruent to an element of $kX$ modulo $n$. But since $n = k \cdot (dk^2)$, all
sufficiently large such numbers lie in $(2k)A$. Since $k \geq 2$, we have
$2k \leq 3k-2$ and thus $G(A) \leq 3k-2$, as desired. 
In fact, we have equality since a number congruent 
to $-1 \; ({\hbox{mod $n$}})$ 
is easily seen not to be representable as a sum of
strictly fewer than $3k-2$ elements of $A$.  
\par This completes the proof of Theorem 1.2.   

\setcounter{equation}{0}
\section{Proof of Theorem 1.3}

The proof follows the argument employed by Farhi to prove (1.9), but makes use
of a couple of extra observations which we first present. To begin with :

\begin{lem}
Let $\alpha$ be positive real number and $S$ a set of non-negative
integers with the property that, for every $n \gg 0$, there exists some
$s \in S$ such that $|s-n| \leq \alpha$. Then $\underline{d}(S) \geq 
\frac{1}{2\lceil \alpha \rceil + 1}$. 
\end{lem}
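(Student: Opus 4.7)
The plan is to discretize the hypothesis so that the bound on $|s-n|$ becomes an integer bound, then tile the positive integers by disjoint intervals each forced to contain an element of $S$.

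First I would set $\alpha' := \lceil \alpha \rceil$. Since $s,n$ are integers and $|s-n|\le \alpha$, we in fact have $|s-n|\le \alpha'$, so the hypothesis rephrases as: for every $n \geq n_0$ (some threshold) the integer interval $[n-\alpha',\, n+\alpha']$, which has exactly $2\alpha'+1$ elements, meets $S$.

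Next, I would choose the centers $n_j := n_0 + j(2\alpha'+1)$ for $j = 0, 1, 2, \ldots$ The key observation is that the corresponding intervals $I_j := [n_j - \alpha',\, n_j + \alpha']$ are pairwise disjoint (consecutive ones are exactly adjacent, since $(n_j + \alpha') + 1 = n_{j+1} - \alpha'$). By hypothesis each $I_j$ contains at least one element of $S$, and disjointness guarantees that these yield distinct elements. Hence for every integer $N \geq n_0$,
\[
|S \cap \{1,\dots,N\}| \;\geq\; \left\lfloor \frac{N - n_0}{2\alpha' + 1} \right\rfloor.
\]
Dividing by $N$ and passing to the $\liminf$ gives $\underline{d}(S) \geq \frac{1}{2\lceil\alpha\rceil+1}$, as required.

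There is essentially no serious obstacle here; the only point requiring care is the passage from the real bound $|s-n|\le\alpha$ to the integer bound $|s-n|\le\lceil\alpha\rceil$, which is what makes the denominator $2\lceil\alpha\rceil+1$ (rather than something like $2\alpha+1$) the correct and sharpest conclusion one can draw by this tiling argument.
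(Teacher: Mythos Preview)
Your proof is correct and follows essentially the same approach as the paper: both arguments set $t = 2\lceil\alpha\rceil+1$, partition the positive integers into disjoint blocks of $t$ consecutive integers, and observe that (beyond the threshold) each block must contain an element of $S$. The only cosmetic difference is that the paper begins its tiling at $1$ and remarks that all but $O(1)$ blocks meet $S$, whereas you begin the tiling at $n_0$ so that every block meets $S$; either way the count is $\lfloor N/t\rfloor - O(1)$ and the conclusion follows.
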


\begin{proof} 
Put $t := 2 \lceil \alpha \rceil + 1$ and 
let $n$ be a very large positive integer. Divide the integers 
$1,2,...,t \cdot \lfloor n/t \rfloor$ into 
$\lfloor n/t \rfloor$ subsets of $t$ 
consecutive integers each. 
The assumption of the lemma implies that all but $O(1)$
of these subsets contain at least one element from $S$. Hence
$|S \cap \{1,...,n\}| \geq \lfloor \frac{n}{t} \rfloor - O(1)$, 
and it follows immediately
that $\underline{d}(S) \geq 1/t$. 
\end{proof}

Our second observation will be an explicit upper bound for the order
of an asymptotic basis of a given lower density. We shall make use of
Theorem 1.4 plus a result concerning bases in finite cyclic
groups $\mathbb{Z}/n\mathbb{Z}$. Here a subset $A \subseteq 
\mathbb{Z}/n\mathbb{Z}$ is called a basis if $hA = \mathbb{Z}/n\mathbb{Z}$ for
some $h \in \mathbb{N}$ and the least such $h$ is called the order of $A$. 
To further distinguish the notion of basis from that of asymptotic basis 
(which makes no 
sense in the finite setting), we denote the order in the former case
by $\rho(A)$. The following result is part of Theorem 2.5 of \cite{7} :

\begin{thm} {\bf (Klopsch-Lev)}
Let $n \in \mathbb{N}$ and $\rho \in [2,n-1]$. Let $A$ be a basis for 
$\mathbb{Z}_n$ such that $\rho(A) \geq \rho$. Then 
\be
|A| \leq \max \left\{ \frac{n}{d} \left( \lfloor \frac{d-2}{\rho - 1} \rfloor
+ 1 \right) : d | n, \; d \geq \rho + 1 \right\}.
\ee
\end{thm}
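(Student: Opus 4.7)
The plan is to apply Kneser's theorem in the finite abelian group $\mathbb{Z}_n$ to the sumset $B := (\rho-1)A$, which by hypothesis is a proper subset of $\mathbb{Z}_n$, and then to read off the stated bound by projecting to the quotient by the stabiliser of $B$. Concretely, I would invoke Kneser to produce a subgroup $H = {\rm Stab}(B) \leq \mathbb{Z}_n$ with $B + H = B$ and $|B| \geq (\rho-1)\,|A+H| - (\rho-2)\,|H|$. Set $d := [\mathbb{Z}_n : H]$, so that $d \mid n$ and $|H| = n/d$, let $\pi : \mathbb{Z}_n \to \mathbb{Z}_n/H \cong \mathbb{Z}_d$ be the quotient map, and write $A_d := \pi(A)$, so that $\pi(B) = (\rho-1)A_d$. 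Because $B$ is $H$-periodic we have $|B| = (n/d)\,|(\rho-1)A_d|$ and $|A+H| = (n/d)\,|A_d|$, and dividing the Kneser bound through by $n/d$ gives the key quotient inequality
\[
|(\rho-1)A_d| \;\geq\; (\rho-1)\,|A_d| \;-\; (\rho-2).
\]

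The second step is to extract bounds on $|A_d|$ and on $d$ from this. Since $\rho(A) \geq \rho$, $B \neq \mathbb{Z}_n$ and therefore $(\rho-1)A_d \neq \mathbb{Z}_d$, which gives $|(\rho-1)A_d| \leq d-1$. Substituting, solving for $|A_d|$ and invoking integrality yields $|A_d| \leq \lfloor (d-2)/(\rho-1)\rfloor + 1$. Conversely, since $A$ is a basis of $\mathbb{Z}_n$ the projection $A_d$ is a basis of $\mathbb{Z}_d$; as no singleton can be a basis of $\mathbb{Z}_d$ once $d \geq 2$, we get $|A_d| \geq 2$. Feeding $|A_d| \geq 2$ back into the key inequality forces $d - 1 \geq 2(\rho-1) - (\rho-2) = \rho$, i.e.\ $d \geq \rho + 1$.

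Finally, $|A| \leq |A+H| = (n/d)\,|A_d| \leq (n/d)\bigl(\lfloor(d-2)/(\rho-1)\rfloor + 1\bigr)$ for some divisor $d$ of $n$ with $d \geq \rho+1$, which is exactly a member of the set over which the maximum on the right of (3.1) is taken, so the bound follows. The main point I expect to have to be careful about is securing $d \geq \rho+1$: it relies on the interplay of the Kneser inequality with the lower bound $|A_d| \geq 2$, and that lower bound depends on the easy but essential observation that the cyclic quotient $A_d$ inherits the basis property from $A$. Everything else is bookkeeping around one application of Kneser and a single pass to a cyclic quotient.
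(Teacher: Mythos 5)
The paper gives no proof of this theorem: it is imported verbatim as Theorem 2.5 of Klopsch--Lev \cite{7} and is used only through its corollary $|A|\cdot\rho(A) < 2n$ in (3.2), so there is no internal argument to compare yours against. Your proof is nevertheless correct and complete. The decisive point, which you rightly flag, is that the divisor $d = [\mathbb{Z}_n : H]$ extracted from the stabiliser $H$ of $(\rho-1)A$ really satisfies $d \geq \rho+1$, so that it lies in the index set of the stated maximum: since $(\rho-1)A$ is a nonempty proper $H$-periodic subset of $\mathbb{Z}_n$ you get $d \geq 2$; the projection $A_d$ inherits the basis property for $\mathbb{Z}_d$ and no singleton generates $\mathbb{Z}_d$ once $d \geq 2$, so $|A_d| \geq 2$; and then $d-1 \geq |(\rho-1)A_d| \geq (\rho-1)|A_d| - (\rho-2) \geq \rho$. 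The remaining steps --- the iterated Kneser inequality $|(\rho-1)A| \geq (\rho-1)|A+H| - (\rho-2)|H|$ with $H$ the stabiliser of the full $(\rho-1)$-fold sumset, division by $|H| = n/d$ using $H$-periodicity, the integrality argument giving $|A_d| \leq \lfloor (d-2)/(\rho-1)\rfloor + 1$ from $|(\rho-1)A_d| \leq d-1$, and finally $|A| \leq |A+H| = (n/d)|A_d|$ --- are all sound, and the conclusion is exactly one term of the maximum on the right of (3.1). This Kneser-plus-quotient strategy is in the same spirit as Klopsch and Lev's own treatment, and as a bonus it makes the present paper self-contained modulo Theorem 1.4's finite-group analogue.
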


From (3.1) it is easily checked to follow that, if $A$ is a basis for 
$\mathbb{Z}/n\mathbb{Z}$, then 
\be
|A| \cdot \rho(A) < 2 n.
\ee
Now we can state the result we shall use :

\begin{lem}
Let $S \subseteq \mathbb{Z}$ satisfy $|S \cap \mathbb{Z}_{-}| < \infty$. 
Suppose that $\underline{d}(S) > 0$ and that $S$ is an asymptotic 
basis. Then $G(S) \leq 4/\underline{d}(S)$.
\end{lem}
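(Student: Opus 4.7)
Set $\alpha := \underline{d}(S)$, $h := G(S)$, and $\alpha_j := \underline{d}(jS)$ for $j \geq 1$; the target is $h \leq 4/\alpha$. The plan is to locate an index $j$ of modest size where doubling along powers of two first fails, apply Kneser's theorem to $jS$ to obtain a modulus $n$, and then combine the resulting cyclic-group basis with the Klopsch--Lev inequality (3.2).

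To locate $j$, I walk along the chain $\alpha_1, \alpha_2, \alpha_4, \alpha_8, \dots$. If every step doubled, \ie\ $\alpha_{2^{i+1}} \geq 2\alpha_{2^i}$ for all $i$, then $\alpha_{2^k} \geq 2^k\alpha$ would eventually exceed $1$, which is impossible. Hence there is a smallest $k \geq 0$ with $\alpha_{2^{k+1}} < 2\alpha_{2^k}$; set $j := 2^k$. Telescoping yields $\alpha_j \geq j\alpha$, and since $\alpha_j > 0$ and $\alpha_{2j} < 2\alpha_j$, Theorem 1.4 applied to $jS$ furnishes an integer $n \geq 1$ with $2jS \sim (2jS)^{(n)}$.

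Let $B \subseteq \Z/n\Z$ be the set of residues of $2jS$ modulo $n$. The asymptotic equality gives $|B|/n = \alpha_{2j}$, and $\alpha_{2j} \geq \alpha_j$ (since $2jS$ contains a translate of $jS$) yields $|B|/n \geq j\alpha$. Moreover $B$ is a basis of $\Z/n\Z$: for $m := \lceil h/(2j)\rceil$ we have $mB \supseteq (hS) \bmod n = \Z/n\Z$, since $hS \sim \N$. Applying (3.2) gives $\rho(B)\cdot|B| < 2n$, so $\rho(B) < 2/(j\alpha)$.

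It remains to show $h \leq 2j\rho(B)$, for this combined with the previous estimate gives $h < 4/\alpha$. Fix any sufficiently large $y$ and, using $\rho(B) B = \Z/n\Z$, pick representatives $b_1,\dots,b_{\rho(B)} \in \{0,\dots,n-1\}$ in $B$ with $\sum b_i \equiv y \pmod{n}$. Write $z_i = b_i + k_i n$; since $2jS \sim (2jS)^{(n)}$, each $z_i$ with $k_i$ past a fixed threshold lies in $2jS$, and the constraint $\sum z_i = y$ is satisfied by letting a single $k_i$ absorb the slack (any $y$ larger than a constant depending only on $n$ and $\rho(B)$ works). Hence $y \in \rho(B)\cdot(2jS) = (2j\rho(B))\cdot S$, so $G(S) \leq 2j\rho(B)$. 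The only step requiring any technical care is this final lifting, but it is routine once Kneser has supplied the structural description of $2jS$; all other pieces are bookkeeping around the density-doubling argument and the cited inequality (3.2).
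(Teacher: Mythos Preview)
Your proof is correct and follows essentially the same approach as the paper: locate the first failure of density doubling along the dyadic chain $S, 2S, 4S, \dots$, apply Kneser's theorem (Theorem~1.4) at that index to pass to a finite cyclic group, and invoke the Klopsch--Lev bound (3.2) to control the order there. The paper's argument differs only cosmetically---it phrases the lifting step as ``$G(2T)$ is at most the order of its image in $\mathbb{Z}/n\mathbb{Z}$'' rather than writing out the representatives $z_i = b_i + k_i n$ explicitly as you do---but the substance is identical.
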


\begin{proof}
Let $k$ be the unique non-negative integer
satisfying $2^{k} \alpha \leq 1 < 2^{k+1} \alpha$. There is a 
smallest integer $j \in \{0,...,k\}$ such that $\underline{d}(2^{j}S)
\geq 2^{j} \alpha$ and $\underline{d}(2^{j+1}S) < 2 \underline{d}(2^{j}S)$.
Set $T := 2^{j} S$ and $\beta := 2^{j} \alpha$, so that $\underline{d}(T)
\geq \beta$ and $\underline{d}(2T) < 2 \underline{d}(T)$. By Theorem 1.4, 
there thus exists a positive integer $n$ such that $2T \sim (2T)^{(n)}$. 
Let $\mathscr{T} \subseteq 
\mathbb{Z}/n\mathbb{Z}$ be the image of $2T$ under the natural projection. 
Then the order of $2T$ as an asymptotic basis
is at most the order of $\mathscr{T}$ 
as a basis in $\mathbb{Z}/n\mathbb{Z}$. Eq. (3.2) implies that
$\rho(\mathscr{T})
< 2n/|\mathscr{T}| \leq 2/\beta$ and hence $G(2T) \leq 2/\beta$ also. 
Finally, then, $G(S) \leq 2^{j+1} G(2T) \leq \left( 
\frac{2\beta}{\alpha} \right) \left( \frac{2}{\beta} \right) = 
\frac{4}{\alpha}$, as required. 
\end{proof}

We can now prove Theorem 1.3. Let $A$ be an asymptotic basis of order $h$, $X$
a finite subset of $A$ such that $G(A \backslash X) < \infty$ and let the 
parameter $\mu$ be as defined in (1.6). By translation invariance, there
is no loss of generality in assuming that $0 \in A \backslash X$ and that 
$\mu = {\hbox{diam}}(X \cup \{0\})$. Put $A^{*} := A \backslash X$. 
If one now follows the proof of (1.9) in
\cite{3}, one readily verifies that what is actually established there is 
that, for every $n \gg 0$, there is some element $a \in hA^{*}$ such that
$|n - a| \leq h\mu$. By Lemma 3.1, it follows that $\underline{d}(hA^{*})
\geq \frac{1}{2h\mu + 1}$. Since $hA^{*}$ is an asymptotic basis, Lemma 
3.3 then implies that its order is at most $4(2h\mu + 1)$. Hence, 
$G(A^{*}) = G(A \backslash X) \leq 4h(2h\mu + 1)$, as required. 
     
\setcounter{equation}{0}
\section{Concluding remarks}

For each positive rational number $d$ we can define the function $X_{d}(h)$
by 
\be
X_{d}(h) = \max_{(A,X)} \frac{1}{d} \; G(A \backslash X),
\ee
the maximum being taken over all pairs $(A,X)$ where $A$ is an asymptotic basis
of order at most $h$ and $X$ is a finite subset of $A$ such that 
$G(A \backslash X) < \infty$ and $d(X) = d$. From (1.7) and Theorem 1.2
it follows 
that, for each integer value of $d$, as $h \rightarrow \infty$ then 
\be
\frac{1}{27} \lesssim \frac{X_{d}(h)}{h^3} \lesssim \frac{1}{6}.
\ee
Similarly, for each positive integer $\mu$, the function $X_{\mu}(h)$ can be
defined by 
\be
X_{\mu}(h) = \max_{(A,X)} \frac{1}{\mu} \; G(A \backslash X),
\ee
where this time the maximum is taken with respect to finite sets $X$
satisfying $\mu(X) = \mu$. From (1.10) one concludes that, for each 
fixed $\mu$ and as $h \rightarrow \infty$, 
\be
\frac{X_{\mu}(h)}{h^2} \lesssim 8.
\ee
For a lower bound, we have

\begin{prop}
For every $\mu \in \mathbb{N}$, as $h \rightarrow \infty$ we have 
\be
\frac{X_{\mu}(h)}{h^2} \gtrsim \frac{1}{4}.
\ee
\end{prop}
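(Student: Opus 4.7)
The plan is to adapt the construction of Theorem~1.2 by choosing $X$ to lie in a short interval, which allows $h$ to be roughly $2k$ rather than $3k$ and boosts the constant from $1/9$ to $1/4$. Fix $\mu \geq 2$; the case $\mu = 1$ reduces to the classical bound $X(h) \geq \lfloor h(h+4)/3 \rfloor$, which already gives $X_{1}(h)/h^{2} \gtrsim 1/3 > 1/4$. For each integer $k \geq 2$, set $h = 2k$, $n = \mu k^{2}$, and define
\[
A^{*} \;=\; \{x \in \N : x \equiv 1 \text{ or } \mu k \pmod{n}\}, \qquad X \;=\; \{0,2,3,\ldots,\mu\},
\]
and $A = A^{*} \cup X$. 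Then $|X| = \mu$, $1 \in A^{*}$, and the next element of $A^{*}$ is $\mu k > \mu$.

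The verification would proceed in four steps. \emph{(i)} $A^{*}$ is an asymptotic basis, because $\delta(A^{*}) = \gcd(\mu k - 1,\mu k^{2}) = \gcd(\mu k - 1,k) = 1$ by a single round of the Euclidean algorithm. \emph{(ii)} Since $1 \in A^{*} \cap (0,\mu)$ and every other element of $A^{*}$ is $\geq \mu k > \mu$, the minimum of $\text{diam}(X \cup \{y\})$ over $y \in A \setminus X$ is attained at $y = 1$ and equals $\mu$, so $\mu(A,X) = \mu$. \emph{(iii)} For exact $h^{*}$-fold sums of $A^{*}$, the residues realised modulo $n$ form the arithmetic progression $\{h^{*} \mu k - s(\mu k - 1) : 0 \leq s \leq h^{*}\}$ of length $h^{*} + 1$ and common difference coprime to $n$; it exhausts $\Z/n\Z$ exactly when $h^{*} \geq n - 1$, giving $G(A^{*}) = n - 1 = \mu k^{2} - 1$.

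The main step is \emph{(iv): $G(A) \leq 2k$.} Using the $0 \in X$ as padding, the residues attainable with $\leq 2k$ summands from $A$ are
\[
\Bigl\{ \sum_{i=1}^{\mu} i\,s_{i} + t \mu k \pmod{n}\;:\; \sum_{i} s_{i} + t \leq 2k,\ s_{i}, t \geq 0 \Bigr\}.
\]
Because $A$ contains every denomination $1, 2, \ldots, \mu$, for each fixed $t$ the inner sum $\sum i s_{i}$ realises every integer in $[0, \mu(2k-t)]$, so the achievable integers form $\bigcup_{t=0}^{2k} [t \mu k,\, t \mu k + \mu(2k-t)]$. The gap between the $t$-th and $(t+1)$-th intervals equals $\mu(t - k)$, which is $\leq 0$ for every $t \in [0,k]$, so these $k+1$ intervals merge into the single interval $[0,\,\mu k(k+1)]$, whose length $\mu k^{2} + \mu k$ exceeds $n = \mu k^{2}$; reducing modulo $n$ therefore hits every residue. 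Combining the four steps, $X_\mu(h) \geq G(A^{*})/\mu = k^{2} - 1/\mu$, so $X_\mu(h)/h^{2} \to 1/4$ as $k \to \infty$. The only genuine obstacle is step (iv): the interval-gap calculation must come out non-positive for all $t \leq k$, which is precisely what forces the choice $h = 2k$ (rather than $3k$ as in Theorem~1.2) and produces the constant $1/4$ in place of $1/9$.
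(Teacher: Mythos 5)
Your proposal is correct and takes essentially the same route as the paper: an explicit construction in which $A^{*}$ is supported on two residue classes modulo an $n$ of size roughly $\mu h^{2}/4$, giving $G(A^{*})=n-1$ while $G(A)\approx h$. The only difference is cosmetic --- the paper takes $X=\{0,1\}$ and encodes $\mu$ as the gap to the nearest element of $A^{*}$, whereas you take $X$ itself of diameter $\mu$ with $1\in A^{*}$ sitting inside its convex hull --- and your step (iv) elides the same routine point the paper does (one must check that the representative of each residue can be chosen to use a summand from a full residue class, so that multiples of $n$ can be absorbed).
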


\begin{proof}
If $\mu(X) = 1$ then the set $X$ must consist of a single element. Then from
(1.2) it follows that $X_{\mu}(h)/h^2 \gtrsim 1/3$ in this case.
\par Now let integers $\mu, h \geq 2$ be given. Take $X = \{0,1\}$, 
$n = h(h-1)\mu + 1$, $A^{*} = \{x \in \mathbb{N} : x \; ({\hbox{mod $n$}}) \in
\{\mu,h\mu\} \}$ and $A = A^{*} \cup X$. Clearly, $\mu(X) = \mu$. 
Secondly, $A^{*}$ is an asymptotic basis, since GCD$(n,(h-1)\mu) = 1$ and
$G(A^{*}) = n-1 = h(h-1)\mu$. Thirdly, it is easy to check that 
$\{0,1,...,n-1\} \subseteq (2h+\mu - 4)Y$, where $Y = \{0,1,\mu,h\mu\}$. 
This in turn is easily seen to imply that $G(A) \leq 2h+\mu-4$ (in fact,
$G(A) = 2h-2$ when $\mu = 2$ and $G(A) = 2h + \mu - 5$ when $\mu \geq 3$). 
Letting $h \rightarrow
\infty$ we deduce (4.5). 
\end{proof}

It remains to obtain tighter bounds than those given in (4.2), (4.4) and (4.5).
The lower bounds in (4.2) and (4.5) can probably be improved by more
judicious constructions similar to those given in this paper. 
However, a satisfactory solution of the whole problem 
will, we speculate, require 
significant advances in our understanding of, on the one hand, the
structure of sets with small doubling and, on the other, of the
postage stamp problem. 

\section*{Acknowledgements}
This work was completed while the author was visiting the Mittag
Leffler Institute in Djursholm, Sweden, and I thank them for their
hospitality. My research is supported by a grant from the Swedish 
Science Research Council (Vetenskapsr\aa det).

\ \\

\end{document}